\def\calE{\mathcal{E}}
\def\calF{\mathcal{F}}
\def\calT{\mathcal{T}}
\def\Pc{P_{cr}}
\def\ra{\rightarrow}
\def\lan{\langle}
\def\ran{\rangle}
\def \Z {\mathbb Z}
\def \I {\mathbb I}
\def \R {\mathbb R}
\def\al{\alpha}
\def\ep{\varepsilon}
\def\la{\lambda}
\def\ga{\gamma}
\newtheorem{thm}{Theorem}[section]
\newtheorem{prop}[thm]{Proposition}
\theoremstyle{plain}
\newtheorem{example}[thm]{Example}
\newtheorem{claim}[thm]{Claim}
\begin{document}

\title{Random spatial growth \\ with paralyzing obstacles}

\author{J. van den Berg\footnote{Research funded in part by the Dutch BSIK/BRICKS project.}, \,
Y. Peres\footnote{Research supported in part by NSF grant DMS-0605166.}, \,
V. Sidoravicius\footnote{Partially supported by CNPq, Brazil}\,  and
M.E. Vares \footnote{Partially supported by CNPq, Brazil}
  \\
{\small CWI and VUA, Microsoft and  UC Berkeley, IMPA and CBPF} \\
{\footnotesize email: J.van.den.Berg@cwi.nl;
peres@stat.berkeley.edu; vladas@impa.br; eulalia@cbpf.br} }
\date{}
\maketitle

\begin{abstract}
We study models of spatial growth processes where initially there
are sources of growth (indicated by the colour green) and sources of
a growth-stopping (paralyzing) substance (indicated by red). The
green sources expand and may merge with others (there is no
`inter-green' competition). The red substance remains passive as
long as it is isolated. However, when a green cluster comes in touch
with the red substance, it is immediately invaded by the latter,
stops growing and starts to act as red substance itself. In our main
model space is represented by a graph, of which initially each
vertex is randomly green, red or white (vacant), and the growth of
the green clusters is similar to that in first-passage percolation.
The main issues we investigate are whether the model is well-defined
on an infinite graph (e.g. the $d$-dimensional cubic lattice),
and what can be said
about the distribution of the size of a green cluster just before it
is paralyzed. We show that, if
the initial density of red vertices is positive, and that of white
vertices is sufficiently small, the model is indeed well-defined and
the above distribution has an exponential tail. In fact, we believe
this to be true whenever the initial density of red is positive. \\
This research also led to a relation between invasion percolation and critical
Bernoulli percolation which seems to be of independent interest.
\end{abstract}

\noindent {\it 2000 MSC:} primary 60K35, secondary 60K37, 82B43.

\noindent {\it Key words and phrases:} Growth process,
percolation, invasion percolation.

\begin{section}{Introduction}
\begin{subsection}{Description of the model and the main problems}
Consider the following model where different `objects' (or
`populations') grow simultaneously until they hit a paralyzing
substance, in which case they stop growing and become paralyzing
themselves: Each vertex of a connected, finite (or countably
infinite, locally finite) graph $G= (V,E)$ is initially,
independently of the other vertices, white, red or green with
probabilities $p_w$, $p_r$ and $p_g$ respectively. Each edge of $G$ is
initially closed. By a green cluster we will mean a maximal
connected subgraph of $G$ of which all vertices are green and all
edges are open. We denote the green cluster containing $v$ at time
$t$ by $C_g(v,t)$. (If $v$ is not green at time $t$, then $C_g(v,t)$
is empty). It is clear from the above that initially the only green
clusters are single green vertices. These green clusters can
grow, merge with other green clusters and finally become paralyzed (red) as follows. \\
Whenever an edge $e = \lan v,w \ran$ is closed and has at least one
green end-vertex, say $v$, it becomes open at rate $1$. Moreover,
immediately after it gets open the following action takes place
instantaneously: If exactly one end-vertex, say $v$, is green and
the other, $w$, is white, $w$ becomes green (and we say, informally,
that the green cluster of $v$ grows by absorbing $w$). If $w$ is
red, then each vertex in the green cluster of $v$ becomes red (and
we say that the green cluster of $v$ becomes paralyzed). Finally, if
both vertices are green, no extra action takes place. (Note that in
this case the two vertices may have been in two different green
clusters right before the opening of $e$, but are now in the same
green cluster).

Note that once an edge is open it remains open, that once a vertex is green it never turns white (but may become red),
and once a vertex is red it remains red.

\smallskip\noindent
Let us first consider the case where the graph $G$ is finite. In
that case the above process is clearly well-defined and has some
obvious properties, which we will state after introducing the
following terminology. By a configuration (or `site-bond
configuration') we mean an element of $\{0,1\}^E \, \times \,
\{\mbox{ green, red, white } \}^V$, where $0$ and $1$ denote `open'
and `closed' respectively. An `open-bond cluster' (with respect to a
configuration) is a maximal connected subgraph of $G$ of which all
edges are open (for that configuration). We say that it is
non-trivial if it has at least one edge. Note that the earlier
defined `green cluster' is an open-bond cluster of which each vertex
is green. A `red cluster' is defined similarly. We call a
configuration admissible if each non-trivial open-bond cluster is
either a red cluster or a green cluster. Now we are ready to state
the announced simple properties and observations: If $G$ is finite,
the process is a Markov chain on the set of admissible
configurations. The admissible configurations where no vertices are
green or all vertices are green are absorbing, and the chain will
with probability 1 end in one of those configurations. In
particular, if initially there was at least one red vertex, then every
green vertex will eventually become red. Moreover (because initially
all edges were closed) at any time, every non-empty red cluster $\mathcal{C}$
contains exactly one vertex $v$ that was originally red.
We say that this vertex $v$ is `responsible for' the other vertices in
$\mathcal{C}$  becoming red (or, that the vertices in $\mathcal{C}$ became red
`due to' $v$).

\smallskip\noindent
If $G$ is {\it infinite}, for instance the $d$-dimensional cubic
lattice, the situation is much more problematic, due to the fact
that the range of the interaction is not bounded: an entire
cluster, no matter how large, can change colour instantaneously.
The main questions we address in this paper concerning the above
process, and some other related models, are:

\begin{itemize}
\item {\bf 1.} Does the dynamics exist? This is a nontrivial issue for
such interacting processes on infinite graphs: See for instance,
Aldous' frozen percolation process (\cite{A}), which was shown by
Benjamini and Schramm (1999, private communication) not to exist in
$\Z^2$. For related matters on the non-existence of that process,
see also Remark (i) in Section 3 of \cite{BeT} and the example due
to Antal J\'arai (1999, private communication) which follows it. A
crucial difference between Aldous' model and ours is that in Aldous'
model, clusters freeze only when they are infinite, while we believe that in our model, due to the
positive density of initially red vertices, the green clusters do
not become infinite (see the next item).
A model which has more in common with ours is
the forest-fire model studied in \cite{D}. But again there is a
major difference: in that model there is a uniform lower bound for
the probability that a cluster of interest is `destroyed' before
growing further, and this uniform bound is a crucial ingredient in
the existence proof in \cite{D}. In our model there seems to be no
 analog of such a property.

\item{\bf 2.} Is a green cluster
      always finite at the moment it becomes red? Does the distribution of its radius (and of its volume)
      have an exponential tail?
\item {\bf 3.} Let $w$ be an originally red vertex. Is the set of
originally green vertices $v$ with the property that $w$ is
responsible for $v$ becoming red, finite? Does the distribution of
its volume have an exponential tail?
\end{itemize}

The organization of the paper is as follows. In Subsection 1.2 we
give a partial answer to the questions listed above. In particular,
Theorem \ref{mainthm} states that, for $G = \Z^d$ and $p_w$
sufficiently small, the answers to the above questions are positive.
Our research also led to a new result for invasion percolation (see
Theorem \ref{uni-bound} and Proposition \ref{inv-perc}). In
Subsection 1.3 we explain the notion of `autonomous region' which
plays an important role in this paper. In subsection 1.4 we briefly
discuss some alternative versions of the model. In section 2 we give
a proof of the main result for the special case where $p_w = 0$. It
turns out that that case can be dealt with in a very elegant and
transparent way. It serves as an introduction to the proof of the
more complicated case where $p_w$ is small but positive, which is
given in Section 3. At the end of Section 3 we come briefly back to
the alternative versions of the model discussed in Subsection 1.4.
\end{subsection}

\begin{subsection}{Statement of the main results}
Let $G$ be a connected, countably infinite graph of bounded degree,
and consider the model presented in Subsection 1.1, with parameters
$p_w$, $p_g$ and $p_r$. Our main result, Theorem \ref{mainthm}
below, states, among other things, that under certain conditions the
dynamics is well-defined. The formulation of the condition requires
some additional notation and terminology: By the distance $d(v,w)$
between two vertices $v$ and $w$ of $G$ we mean the length (i.e.
number of edges) of the shortest path from $v$ to $w$. The diameter
of a set of vertices $W$ of $G$ is defined as $\max_{v, w \in W}
d(v,w)$, and $\partial W$ will denote the set of all vertices that
are not in $W$ but have an edge with some vertex in $W$.
The number of elements of a set $W$ will be denoted by
$|W|$. For a finite graph $H$, denote by $|H|$ the number of
vertices in $H$. Let $D$ denote the maximal degree in $G$.

For each vertex $v$ of $G$ and $p \in (0,1)$, let $\xi_v(p)$ denote
the expectation of the volume (i.e. number of vertices) of the
occupied cluster of $v$ in site percolation on $G$ with
parameter $p$. Further, define
$$\xi(p) = \sup_v \xi_v(p).$$
Recall the definition of $C_g(v,t)$ in Subsection 1.1.
We are now ready to state our main results.
\begin{thm}\label{mainthm}
Suppose that
\begin{equation} \label{key}
(D-1) \xi(p_w) < p_r \,.
\end{equation}
We have
\item
{(a)} The dynamics on $G$ is well-defined. With probability 1, at
any time, each red cluster has a unique initially red vertex.

\item (b) For any originally green
vertex $v$, let $C_g(v) = \cup_{t \geq 0} C_g(v,t)$ be the green
cluster of $v$ just before it becomes red. Let $|C_g(v)|$
be the number of vertices of $C_g(v)$. Then, with probability $1$,
$|C_g(v)|$ is finite for each such $v$. Moreover, the distribution of
$|C_g(v)|$ has an exponential tail.

\item (c) If $G$ is a Cayley graph and $w$ is an originally red vertex in $G$,
then the set $D(w)$ consisting of all green vertices that become red
due to $w$ is finite; moreover, the diameter of $D(w)$ has an
exponential tail. (Here, extending the definition given before in
the case of finite $G$, if $v$ is an originally green vertex and $w$
is the (unique a.s.) originally red vertex in the red clusters that
eventually contain $v$, we say that $v$ becomes red due to $w$.)

\item (d) If $G$ is the $d$-dimensional cubic lattice, then the distribution of
$|D(w)|$ also has an exponential tail.

\end{thm}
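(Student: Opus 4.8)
The plan is to upgrade the diameter estimate of part (c) to a volume estimate by using that $D(w)$ is not an arbitrary connected set of small diameter but the outcome of a \emph{subcritical} invasion, and then comparing its volume with that of a subcritical percolation cluster on $\Z^d$. Note first that part (c) by itself does not suffice: a connected subset of $\Z^d$ of diameter $R$ may contain up to $\sim R^d$ vertices, so combining $\P(\mathrm{diam}(D(w))\ge R)\le Ce^{-cR}$ with $|D(w)|\le C\,\mathrm{diam}(D(w))^d$ only yields the stretched-exponential bound $\P(|D(w)|\ge n)\le Ce^{-cn^{1/d}}$. The additional input is that the green vertices that turn red due to $w$ are confined to a region dominated by a subcritical cluster, whose \emph{volume} (not merely diameter) has an exponential tail because we are on $\Z^d$.

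Concretely, I would reuse the renormalization underlying the proof of parts (a)--(c) (for $p_w$ small; hypothesis \eqref{key} is precisely what pushes the associated branching-type quantity below $1$). Tile $\Z^d$ by disjoint boxes $B_z$, $z\in\Z^d$, of a fixed large side length $\ell$, and call a box $z$ \emph{bad} if the initial colouring in a bounded window around $B_z$ witnesses one of the "bad'' events of that proof (an unusually large white cluster, or an unusually long invasion step emanating from near $B_z$). By the estimates behind (a)--(c), for $\ell$ large each box is bad with a probability that can be made arbitrarily small, and — crucially, this is where the unbounded range of the interaction must be handled via the autonomous-region argument of Subsection 1.3 and Section 3 — the bad events depend only on the initial data in a bounded neighbourhood of the box, so the field of bad boxes is stochastically dominated by a subcritical Bernoulli site percolation on the renormalized lattice $\Z^d$. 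The geometric claim extracted from the proof of (c) is that $D(w)$ is contained in the union of $w$'s box, the connected cluster $\mathcal{K}$ of bad boxes attached to it, and the boxes neighbouring these (an autonomous region: a good box stops the red cascade within itself and its neighbours, so the cascade can only travel through bad boxes). Hence $|D(w)|\le C\ell^d\,(1+|\mathcal{K}|)$.

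It then remains to bound $\P(|\mathcal{K}|\ge m)$, and since bad boxes are dominated by a subcritical Bernoulli site percolation on $\Z^d$, this is the classical fact that in the subcritical regime the number of vertices of the open cluster of the origin has an exponential tail, $\P(|\mathcal{K}|\ge m)\le Ce^{-cm}$ (a standard lattice-animal count: there are at most $C_1^m$ connected sets of $m$ boxes through a fixed box, each bad with probability at most $p_{\mathrm{bad}}^m$, and $C_1p_{\mathrm{bad}}<1$ once the boxes are large). Combining, $\P(|D(w)|\ge n)\le\P(|\mathcal{K}|\ge n/(C\ell^d)-1)\le C'e^{-c'n/\ell^d}$, the desired exponential tail; this is exactly where $G=\Z^d$ is used rather than a general Cayley graph, as the argument rests on tiling by finitely many boxes of bounded volume and on exponential decay of subcritical cluster volume on $\Z^d$.

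The main obstacle is the localization step of the second paragraph: making the "bad box'' events genuinely depend only on a bounded window despite the fact that an arbitrarily large green cluster can be paralyzed in a single instant. This is the same difficulty that makes (a)--(c) delicate, and it is resolved by the autonomous-region machinery — one shows that whether $w$'s green neighbourhood can reach outside a region is decided inside that region — so for (d) this step is essentially inherited; the genuinely new point is that, once $D(w)$ is trapped in a subcritical cluster of \emph{bounded} boxes, its volume and not only its diameter is controlled. An alternative route avoids the explicit renormalization by writing $|D(w)|=\sum_i|C_i|$ over the green clusters $C_i$ successively absorbed into the red cluster of $w$, bounding each $|C_i|$ via part (b) and the number and arrangement of the $C_i$ via the subcritical comparison from (c); but converting that into a true exponential (rather than stretched-exponential) tail for the sum again routes through essentially the box estimate above, so I would present the renormalization version.
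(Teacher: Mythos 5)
Your write-up addresses only part (d) of Theorem \ref{mainthm}; parts (a)--(c), and more importantly the quantitative machinery they rest on, are assumed rather than proved, and that is where the substance of the theorem lies. There is no renormalization ``underlying the proof of parts (a)--(c)'' to reuse: what a proof must first supply is the construction of the dynamics via finite autonomous regions, i.e.\ that the exploration/invasion algorithm started at a vertex terminates a.s.\ and that the autonomous graph it produces has volume with a uniform exponential tail (for $p_w=0$ this is Claim \ref{claimpw0} together with the geometric number of invasion steps; for $p_w>0$ it is Proposition \ref{prop1}, whose proof is a negative-drift/large-deviation argument for the number of active trees, and this is the only place where hypothesis \eqref{key} enters). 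Part (c) further needs the mass-transport principle to pass from the bound on $d(v,R(v))$ to the radius of $D(w)$. None of this appears in your proposal, so as a proof of the stated theorem it has a genuine gap: existence of the process (the delicate point, given the unbounded range of the paralyzing interaction) and the tails in (b) and (c) are inputs you invoke, not outputs you establish. Note also that the input your block argument really needs is not the diameter bound of (c) but the (b)-type uniform exponential tail for the size of the stopped invasion tree/autonomous region grown from an arbitrary vertex: it is because ``the exploration from $y$ exceeds size $L/4$'' is determined by the exploration truncated at that size (plus the colours and $\tau$-values it examines) that your bad-box field has bounded range; the initial colouring alone does not determine it, since the $\tau$-variables are involved.

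For part (d) itself your route coincides with the paper's: the paper tiles $\Z^d$ into boxes $Q_L(x)$, calls a box proper if it contains a vertex $y$ with $|T(y)|>L/4$, observes that every box met by $D(\mathbf{0})$ other than (essentially) the origin's is proper, that the properness indicators form a $2$-dependent field of density $\ep(L)\to 0$, and concludes with a lattice-animal count; your version replaces the direct $2$-dependence bound \eqref{1dp} by stochastic domination by subcritical Bernoulli site percolation, which works equally well, and your opening remark that the diameter bound of (c) alone yields only a stretched-exponential bound $e^{-cn^{1/d}}$ is exactly the reason the paper runs this block argument. So, granted (a)--(c) and Proposition \ref{prop1}, your part (d) is essentially the paper's proof; the missing pieces are everything before it.
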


Note that in the case $p_w = 0$, condition~\eqref{key} of Theorem
\ref{mainthm} is satisfied for every positive $p_r$. For this case
we have, in addition to Theorem \ref{mainthm}, considerably
stronger results. In particular, the
following theorem holds, where we fix $p_w = 0$ and then vary the
parameter $p_r$. In this theorem and its proof, $P_p$ denotes the
ordinary (Bernoulli) bond percolation measure with parameter $p$
and $\Pc$ stands for $P_{p_c}$, where $p_c$ denotes the critical
probability for this percolation model. By $B(n)$ we denote the
set of all vertices at (graph) distance $\leq n$ from some
specified vertex $O$. The event that there is an open path from $O$ to $\partial B(n)$ is
denoted by $\{O \leftrightarrow \partial B(n)\}$. Further, the symbol $\approx$ denotes
logarithmic equivalence, i.e., we say for two positive functions
$g(n)$ and $h(n)$ that $g(n) \approx h(n)$ as $n \rightarrow
\infty,$ if

$$\frac{\log h(n)}{\log g(n)} \rightarrow 1, \,\,\, n \rightarrow \infty.$$
Let $W$ be a set of vertices in a graph $G$ with a distinguished vertex $O$.
By the {\em radius} of  $W$ we mean the maximal distance from $O$ to a vertex of $W$.
We are now ready to state the following theorem.

\begin{thm} \label{uni-bound}
Let $C_g(\cdot)$ be as in part (b) of Theorem \ref{mainthm}. If $G$
is the square lattice in two dimensions (or the triangular or the
hexagonal lattice), and $p_w = 0$, then
$$P(\mbox{The radius of } C_g(O) \mbox{ is at least } n) \,\, \uparrow f(n), \,\,
\mbox{ as } p_r \downarrow 0,$$
where $f$ is a function satisfying

$$f(n) \approx \Pc(O \leftrightarrow \partial B(n)).$$

\end{thm}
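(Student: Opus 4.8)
\emph{Overview.} The plan is to prove this in two stages. First, a \emph{reduction}: for $p_w=0$ the whole growth process can be read off from a single percolation coupling, and the $p_r\da 0$ limit of $\P(\text{radius of }C_g(O)\ge n)$ turns out to be the probability that a certain invasion-percolation object — the ``first pond'' of $O$ — has radius $\ge n$. Second, we match this quantity with $\Pc(O\lra\partial B(n))$ up to logarithmic equivalence; the upper bound here is the real content.

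\emph{Part 1: the reduction.} Condition on $O$ being green (the unconditional probability differs by the factor $1-p_r\to1$), and use the standard coupling of Bernoulli percolation at all parameters via i.i.d.\ uniform edge-weights $(t_e)$; write $K(O,p)$ for the cluster of $O$ in $\{e:t_e\le p\}$ and couple the red set monotonically through i.i.d.\ vertex marks. Since $p_w=0$ the dynamics admits a static description: process edges in increasing weight order, merging the green clusters of the endpoints, or — if one endpoint is already red — recolouring the whole green cluster of the other endpoint red. One verifies (this is essentially the $p_w=0$ analysis of Section 2, using that reddening propagates through a whole $\le p$-connected cluster as soon as it contains one initially red vertex) that, with $t^\star:=\inf\{p:K(O,p)\text{ contains an initially red vertex}\}$, one has $C_g(O)=K(O,t^{\star-}):=\bigcup_{p<t^\star}K(O,p)$: for $p<t^\star$ the cluster $K(O,p)$ has no initially red vertex and is entirely green, and at $t^\star$ this (finite) cluster touches red and is paralyzed. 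Now let $p_r\da0$. Then $t^\star$ increases; and writing $t^+:=\inf\{p:O\lra\infty\text{ at level }p\}$, for every $p<t^+$ the set $K(O,p)$ is finite, so given the weights it contains a red vertex with probability $1-(1-p_r)^{|K(O,p)|}\da0$, whereas at level $t^+$ the cluster is infinite and hence a.s.\ contains a red vertex; thus $t^\star\uparrow t^+$ a.s. Therefore $C_g(O)=K(O,t^{\star-})\uparrow K(O,t^{+-})$ and
\[
\P(\text{radius of }C_g(O)\ge n)\ \uparrow\ f(n):=\P\bigl(\text{radius of }K(O,t^{+-})\ge n\bigr).
\]
Since $\theta(p_c)=0$ we have $t^+>p_c$ a.s., so $K(O,t^{+-})$ is a.s.\ finite; it is exactly the \emph{first pond} of invasion percolation from $O$ (the set invaded before the first edge of maximal invaded weight is used), which is the promised link with invasion percolation.

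\emph{Part 2: $f(n)\approx\Pc(O\lra\partial B(n))$; the easy half and the split.} On $\{O\lra\partial B(n)\text{ at level }p_c\}$ the pond $K(O,t^{+-})$ contains $K(O,p_c)$, hence reaches $\partial B(n)$; so $f(n)\ge\Pc(O\lra\partial B(n))$, giving one side of the logarithmic equivalence. For the other side write $\pi_1(n)=\Pc(O\lra\partial B(n))$ and let $L(\de)$ be the correlation length at $p_c+\de$; choose $\de_n$ with $L(\de_n)=n$, so by Kesten's scaling relations $\theta(p_c+\de_n)\asymp\pi_1(L(\de_n))=\pi_1(n)$. Split according to the threshold $t^+$:
\[
f(n)\ \le\ \P(t^+\le p_c+\de_n)\ +\ \P\bigl(\text{radius of }K(O,t^{+-})\ge n,\ t^+>p_c+\de_n\bigr).
\]
The first term is $\theta(p_c+\de_n)\asymp\pi_1(n)$, which is harmless.

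\emph{Part 2 continued: the crux.} For the second term the relevant level $t^{+-}$ exceeds $p_c+\de_n$ and $K(O,t^{+-})$ is a \emph{finite} cluster reaching $\partial B(n)$ with $O\not\lra\infty$. The key two-dimensional, near-critical input is that, \emph{uniformly in $\de>0$},
\[
P_{p_c+\de}\bigl(O\lra\partial B(n),\ O\not\lra\infty\bigr)\ \le\ C\,\pi_1(n):
\]
for $n\le L(\de)$ this is RSW plus quasi-multiplicativity (one-arm stays comparable to critical below the correlation length), and for $n>L(\de)$ it is the exponential decay of finite clusters beyond the correlation length, $P_{p_c+\de}(O\lra\partial B(n),O\not\lra\infty)\le\pi_1(L(\de))e^{-cn/L(\de)}\le\pi_1(n)$, the last step because $\pi_1$ decays only polynomially. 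To upgrade this single-level bound to the second term (which ranges over a whole interval of levels because $t^+$ is random), condition on the innermost dual circuit $\Ga$ surrounding $K(O,t^{+-})$: $\Ga$ has radius $\ge n$, all its weights are $\ge t^+>p_c+\de_n$, and inside $\Ga$ the vertex $O$ reaches the inner boundary of $\Ga$ using only strictly smaller weights, so conditioning on $\Ga$ decouples the inside; summing the resulting estimate over a dyadic discretization of the minimal weight on $\Ga$ costs only a factor $O(\log n)$. Altogether $f(n)\le C(\log n)\,\pi_1(n)$, so $\log f(n)/\log\pi_1(n)\to1$.

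\emph{Main obstacle.} Everything through Part 1 and the lower bound is soft; the hard part is the uniform-in-$\de$ near-critical estimate together with the decoupling via the innermost dual circuit — i.e.\ taming the union over the random threshold $t^+$ while losing at most a polylogarithmic factor (absorbed by the $\approx$). This is exactly where duality, RSW and Kesten's scaling relations enter, and is why the statement is confined to the square, triangular and hexagonal lattices.
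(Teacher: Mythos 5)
Your Part~1 and the lower bound are correct and essentially coincide with the paper's route: the identification $C_g(O)=K(O,t^{\star-})$ is the content of Claim~\ref{claimpw0}, the monotone coupling of the red set via i.i.d.\ vertex marks is exactly how the paper couples the stopped invasions with ordinary invasion percolation, and $K(O,t^{+-})$ is the region whose radius is $\hat R$ in Proposition~\ref{inv-perc}, whose part (a) is your easy half. The gap is in the upper bound. By fixing the split level through $L(\de_n)=n$ you make the first term sharp, but you destroy the only cheap estimate available for the second term: on $\{\hat R\ge n,\ t^+>p_c+\de_n\}$ the closed dual circuit around the pond gives, via Kesten's bound \eqref{ke1}, only $C_1e^{-C_2 n/L(\de_n)}=C_1e^{-C_2}$, a constant. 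You are therefore forced into a genuinely near-critical refinement --- essentially the later result of \cite{BJV} --- and the two assertions carrying it are not established: (i) the uniform bound $P_{p_c+\de}(O\lra\partial B(n),\,O\not\lra\infty)\le \Pc(O\lra\partial B(L(\de)))\,e^{-cn/L(\de)}$ with the critical prefactor is itself a nontrivial quasi-multiplicativity/gluing statement, and (ii) the claim that ``conditioning on the innermost dual circuit $\Ga$ surrounding $K(O,t^{+-})$ decouples the inside'' is not justified as stated: which circuit is innermost around the pond, and the value of $t^+$ itself, depend on the interior weights, so conditioning on $\Ga$ does not leave the interior weights i.i.d.\ without a genuine argument. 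Making such an outlet/circuit decomposition rigorous (for instance via FKG between the decreasing arm event at one dyadic level and the increasing closed-circuit event at the adjacent level, together with Kesten's estimates and a sum over scales) is precisely the content of \cite{BJV}, not a routine step.

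The extra difficulty is also unnecessary for the statement, which only asks for logarithmic equivalence (you yourself concede a $\log n$ factor). Choose instead the split level $p'$ with $L(p')\asymp n/(C_4\log n)$, as the paper does. Then $\{\hat R\ge n,\ t^+>p'\}$ is contained in $\bigcup_{p>p'}A_{n,p}$, which by the nesting of the circuit events in $p$ and \eqref{ke1} has probability at most $C_1 n^{-C_2C_4}$; since $\Pc(O\lra\partial B(n))$ has a power-law lower bound, taking $C_4$ large makes this term negligible, while the other term is $\theta(p')\le C_3\,\Pc\bigl(O\lra\partial B(n/(C_4\log n))\bigr)\approx\Pc(O\lra\partial B(n))$ by \eqref{ke2} and RSW. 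With this choice the union over the random level $t^+$ is handled for free by monotonicity of the events $A_{n,p}$, and no decoupling of interior from exterior is needed.
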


Theorem \ref{uni-bound} follows easily from the following
Proposition concerning invasion percolation on the
lattices considered in the theorem. Before we state it, we briefly
recall the invasion percolation model (on these lattices) and some
of its basic properties. (Invasion percolation was introduced by Wilkinson and Willemsen, see \cite{WW}.
For a detailed study of this process
see \cite{LPS}, or the earlier works \cite{CCN}, \cite{Ale} and \cite{J2}).
To each edge $e$ we assign, independent of
the other edges, a random variable $\tau_e$, uniformly distributed
in the interval $(0,1)$. We construct, recursively, a
growing tree. Initially the tree consists only of one vertex, say
$O$. At each step we consider all edges that have exactly one
endpoint in the tree that has been created so far. From these edges
we select the one with smallest $\tau$ value and add it (and its
`external' endpoint) to the tree. Let $\tau(n)$ be the $\tau$ value
of the $n$th edge invaded by this procedure.
For any infinite transitive graph $G$, it is proved in \cite{HPS} that
\begin{equation}
\label{hpseq}
\limsup_{n \rightarrow \infty} \tau(n) = p_c,
\end{equation}
where $p_c$ is the critical probability for bond percolation.
Further, note that, if all $\tau(n) < p_c$, then $O$ belongs to an
infinite cluster on which all $\tau$ values are smaller than
$p_c$. For the graphs in the statement of Theorem \ref{uni-bound}
this latter event has probability $0$. (See \cite{G} for this
classical result and references). Hence, for these lattices,
(a.s.) there is an $n$ with $\tau(n) > p_c$. This, together with
\eqref{hpseq}, implies that (a.s.) $\tau(n)$ achieves its maximum
(and that this maximum is larger than $p_c$). The following
proposition is about the invaded region at the step where this
maximum is achieved. Although this and related regions have been
under consideration before in the literature (see the subsection
`Ponds and outlets' in Stein and Newman (1995)),
this result is, as far as we know, new. \\
{\bf Remark:} The {\em invasion basin\/} of $O$ is defined similarly to the invasion tree,
except that at every step, the edge of minimal $\tau$-value among the edges outside the
current invasion basin
that have  {\em at least\/}  one endpoint in the  basin is added to the basin.
The invasion basin is typically not a tree. It is easy to
see that each edge $e$ in the invasion tree is in the invasion basin,
and  the set of sites in the invasion basin
immediately before such an edge $e$ is added to it
is the same as the set of vertices in the invasion tree immediately before $e$ is added.

\begin{prop} \label{inv-perc}
Consider invasion percolation on the square lattice (or the
triangular or the hexagonal lattice) with edge values $\tau_e$. Let
$\hat e$ be the edge with maximal $\tau$ value in the invasion basin
(as explained above). Let $\hat R$ be the radius of the region that
has been invaded up to the step where $\hat e$ is invaded. We have:
\item(a)
\begin{equation*}
 P(\hat R > n) \ge \Pc(O \leftrightarrow
\partial B(n)) ;
\end{equation*}
\smallskip
\item(b)
\begin{equation}
\label{invperc} P(\hat R > n) \approx \Pc(O \leftrightarrow
\partial B(n)), \,\,\, n \rightarrow \infty.
\end{equation}
\end{prop}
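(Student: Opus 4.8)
\emph{The plan.} Everything hinges on one structural identity for the invasion basin. For $p\in(0,1)$ write $\mathcal{C}_p(O)$ for the cluster of $O$ in the bond configuration $\{e:\tau_e<p\}$; this has the law of the cluster of $O$ in Bernoulli bond percolation at density $p$. Let $\hat\tau=\tau_{\hat e}$ be the value of the maximal invaded edge (recall $\hat\tau>p_c$ a.s., as explained above the statement). Since the $\tau_e$ are a.s.\ distinct and $\hat\tau$ is the largest value ever invaded, every edge invaded strictly before $\hat e$ has value $<\hat\tau$, so the region invaded before $\hat e$ is a connected set containing $O$ whose edges all have value $<\hat\tau$, hence is contained in $\mathcal{C}_{\hat\tau}(O)$. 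Conversely, at the step $\hat e$ is invaded every boundary edge of the current region has value $\ge\hat\tau$ (with $\hat e$ achieving the minimum), so no edge of value $<\hat\tau$ is on the boundary and the region must contain all of $\mathcal{C}_{\hat\tau}(O)$. Thus the region invaded just before $\hat e$ equals $\mathcal{C}_{\hat\tau}(O)$; in particular $\hat R<\infty$ a.s., and, using $\hat\tau>p_c$,
$$\{\,\mathcal{C}_{p_c}(O)\text{ has radius }>n\,\}\ \subseteq\ \{\hat R>n\}\ \subseteq\ \{\,\mathcal{C}_{\hat\tau}(O)\text{ has radius }>n-1\,\}.$$

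The left inclusion gives part (a) at once: $P(\hat R>n)\ge P(\mathcal{C}_{p_c}(O)\text{ has radius}>n)=\Pc(O\lra\partial B(n))$. As this quantity is polynomially bounded above and below (RSW), it also yields $\liminf\log\Pc(O\lra\partial B(n))/\log P(\hat R>n)\ge1$, i.e.\ one half of the logarithmic equivalence in (b). It remains to prove the matching bound $P(\hat R>n)\le n^{o(1)}\Pc(O\lra\partial B(n))$.

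For this I would fix a near-critical density $q_n=p_c+\delta_n$, $\delta_n\downarrow0$, chosen so that the (Kesten) characteristic length obeys $L(q_n)=n^{1-\epsilon_n}$ with $\epsilon_n\downarrow0$ but $\epsilon_n\gg\log\log n/\log n$ (e.g.\ $\epsilon_n=1/\log\log n$); this is possible since $L(p_c+\delta)\to\infty$. Splitting the right-hand event above on whether $\hat\tau\le q_n$, and using $\mathcal{C}_{\hat\tau}(O)\subseteq\mathcal{C}_{q_n}(O)$ on $\{\hat\tau\le q_n\}$,
$$P(\hat R>n)\ \le\ P_{q_n}\!\big(O\lra\partial B(n-1)\big)\ +\ P\big(\hat\tau>q_n,\ \hat R>n\big).$$
For the first term, since $n/L(q_n)=n^{\epsilon_n}$ is super-logarithmic one has $P_{q_n}(O\lra\partial B(n))\asymp\theta(q_n)$, and by Kesten's near-critical estimates together with quasi-multiplicativity and the polynomial RSW bound on the annulus one-arm exponent, $\theta(q_n)\asymp\Pc(O\lra\partial B(L(q_n)))\le n^{O(\epsilon_n)}\Pc(O\lra\partial B(n))=n^{o(1)}\Pc(O\lra\partial B(n))$. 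For the second term, on $\{\hat\tau>q_n,\hat R>n\}$ the pond $\mathcal{C}_{\hat\tau}(O)$ is a finite set of radius $>n-1$ all of whose boundary edges have value $>q_n$; dualizing, there is a circuit around $O$ of dual edges open at the \emph{subcritical} level $1-q_n$ (whose correlation length is $\asymp L(q_n)$) and reaching distance $>n-1$ from $O$. This is a fixed-density event, and standard subcritical estimates bound it by $\mathrm{poly}(n)\,e^{-cn/L(q_n)}$: if the circuit meets $B(n/2)$ it contains an open crossing of the annulus $B(n)\setminus B(n/2)$; otherwise it surrounds $B(n/2)$ from outside, hence via its intersections with the coordinate half-axes contains two disjoint open connections between points at mutual distance $\gtrsim n$, to which one applies the BK inequality and sums the exponentially decaying two-point functions over the relevant (polynomially many) endpoints. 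Either way this term is $\mathrm{poly}(n)\,e^{-cn^{\epsilon_n}}=n^{-\omega(1)}$, negligible next to $\Pc(O\lra\partial B(n))$; combining the two bounds completes (b).

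I expect the delicate step to be the first term: showing the near-critical one-arm probability at $p_c+\delta_n$ exceeds the critical one by only a factor $n^{o(1)}$, which is precisely what forces $L(q_n)=n^{1-o(1)}$ and relies on Kesten's scaling theory near criticality (through $\theta(p)\asymp\Pc(O\lra\partial B(L(p)))$, quasi-multiplicativity of arm events, and RSW). The structural identity, part (a), and the subcritical circuit estimate are comparatively routine.
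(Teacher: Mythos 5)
Your proposal is correct and follows essentially the same route as the paper: the lower bound comes from the pond containing the critical cluster of $O$ (since $\hat\tau>p_c$), and the upper bound splits on whether $\hat\tau$ exceeds a near-critical threshold defined through Kesten's correlation length $L(\cdot)$, bounding one case by a closed dual-circuit estimate decaying like $\exp(-cn/L)$ and the other via Kesten's $\theta(p)\le C\,\Pc(O\lra\partial B(L(p)))$ combined with RSW/quasi-multiplicativity to compare scales. The differences are cosmetic: you make the pond identity (invaded region $=$ the $\hat\tau$-open cluster) explicit, place the threshold at $L(q_n)=n^{1-o(1)}$ instead of $n/(C_4\log n)$, and bound the small-$\hat\tau$ case by the one-arm probability at level $q_n$ rather than by $\theta$ just below the threshold, which trades the paper's tuning of $C_4$ against polynomial arm bounds for a superpolynomially small error term (and, like the paper, still needs the minor care that $q_n$ can be chosen with $L(q_n)$ in the desired window, handled by one-sided limits/monotonicity).
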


\noindent{\bf Remarks:} \\
(a) Proposition \ref{inv-perc} has triggered further research on the comparison of ponds and critical percolation
clusters: see recent refinements and generalizations in \cite{BJV}. \\
(b) The value $\hat R$ above can also be described in the
following, somewhat informal, way. Suppose each edge $e$ is closed
at time $0$ and becomes open at time $\tau_e$ (after which it
remains open). The open cluster of $O$ grows in time. Up to time
$p_c$ it is finite, but at some time larger than $p_c$ it will
become infinite (a.s). The radius of this cluster just before it
becomes infinite is $\hat R$.

\end{subsection}

\begin{subsection}{Description of the model in terms of passage times.
Autonomous regions}
Consider the description of the dynamics in the beginning of this
section, and assume for the moment that the graph is finite. Recall
that an open edge remains open and that a closed edge with at least
one green end-vertex becomes open at rate $1$. This means that if we
assign to each edge $e$ an exponentially distributed (mean $1$)
random variable $\tau(e)$, independent of the other edges (and of
the initial colours of the vertices), the time evolution of the
process can be completely described in terms of the initial colours
of the vertices and the $\tau-$ variables of the edges: Each edge
$e$ remains closed until the time $t$ at which $L_t(e)$ (defined
below) has Lebesgue measure $\tau_e$. (If no such time exists, the
edge remains closed forever). Here $L_t$ is defined by
\begin{equation}
\label{Ldef} L_t(e) = \{s < t \, : \, e \mbox{ has at least one
green end-vertex at time } s\}.
\end{equation}
(Since, once a vertex is green it can change colour only one more
time, $L_t(e)$ is clearly an interval or union of two intervals).
When $e$ becomes open and one of its end-vertices is white or red,
the appropriate action in the description in Section 1.1 is carried
out instantaneously.

In the following this equivalent description of the process turns
out to be very convenient. To illustrate it and to emphasize the
difference with one of the modified models that will be discussed in
Subsection 1.4, we give the following example:

\begin{example}
\label{dynex1}
Consider the graph with vertices denoted by $\{1, 2, 3, 4, 5\}$ and edges $\lan i, i+1 \ran$, $1 \leq i \leq 4$.
Suppose that the initial colours of the vertices $1, \cdots, 5$ are red, green, white, green, red
respectively, and that the $\tau$ values of the edges $\lan 1, 2 \ran, \cdots \lan 4, 5 \ran$ are
$6$, $3$, $4$ and $2$ respectively. As one can check by following the above description,
the initially green vertex $2$ becomes red at time $5$ due to vertex $5$.
\end{example}

\smallskip\noindent
Now suppose some finite, but possibly large, graph $G$ is given,
together with initial colours $c(v), v \in V$ and `opening times'
$\tau(e), e \in E$. Further suppose we are only interested in the
time evolution in a small subgraph of $G$, for instance just one
initially green vertex $v$. Do we need to `follow' the process in
the whole graph to reconstruct what happens at $v$? Often this is
not the case. An instructive  example is when $v$
is incident to three
edges, $e$, $e'$ and $e''$ with the properties that $\tau(e)$ is
smaller than $\tau(e')$ and $\tau(e'')$, and that the other
end-vertex of $e$, which we denote by $w$, is red. In that case we
know that $v$ is green until time $\tau(e)$ and from then on is red
(which would also happen in the `isolated' graph consisting only of
the vertices $v$ and $w$ and the edge $e$). This holds no matter
what the initial colours of the vertices in $V \setminus \{v,w\}$
and the $\tau$-values of the edges in $E \setminus \{e, e', e''\}$
are. Note that this still holds when we extend $G$ to a bigger graph
(with $c$ and $\tau$-variables) as long as we don't add extra edges
to $v$.

This brings us to the notion of {\it autonomous set}: Let
$H=(V(H),E(H))$ be a finite sub-graph of a graph $G$, and let
$\bar E$ be a finite set of external edges of $H$, i.e. edges  in
$G$, which have exactly one vertex in $V(H)$.  Assume that we have
given an initial colour assignment $c(v)$ to all $v \in V(H)$ and
opening times $\tau(e)$ to all $e \in E(H) \cup \bar E$. Let $\bar
H$ be the minimal graph containing $H$ as subgraph and $\bar E
\subset E(\bar H)$. We say that $(H,\bar E)$ is {\bf autonomous}
(with respect to $\tau$ and $c$), if for every finite subgraph
$G_0$ of $G$  which has $\bar H$ as a subgraph, the growth process
on $G_0$ starting with a colour pattern and opening times
extending the above given $c$'s and $\tau$'s has, restricted to
$H$, always the same time evolution, i.e. the same evolution as it
would have with $G_0=\bar H$, and which does not depend on colours
at the vertices in $\bar H$ not in $H$. In the simple example
considered in the previous paragraph, the graph with vertices $v$
and $w$, and edge $e$, together with the set of external edges
$\bar E = \{e', e''\}$, is autonomous.

Often, when the identity of $\bar E$ is obvious and the choice of
$c$- and $\tau$- variables is considered to be known, we simply say
that $H$ is autonomous. For this reason we might refer to the
autonomous set as ``autonomous subgraph".

\smallskip\noindent
Now suppose we have an infinite graph $G$ with given $\tau$- and $c$- variables. If every vertex (and every edge) is
contained in a finite autonomous subgraph of $G$, the infinite-volume time evolution on $G$ can be defined
in an obvious way.
The key of the proof of Theorem \ref{mainthm} is to show that, under the condition in the theorem, these autonomous
subgraphs exist with probability $1$. That is, for almost-all initial colour patterns, and almost-all $\tau$-values
each vertex and edge is contained in a finite autonomous region.

\end{subsection}

\begin{subsection}{Some alternative versions of the model}
There are many modifications or generalizations of our model (which we will sometimes call the
{\it basic model} to distinguish it from these modified versions). Below we mention four of them.

\smallskip\noindent
(i) In the basic model the $\tau$ variables are exponentially distributed. It is easy to see that if
the initial colours of the vertices are given,
and none of them is white,
the time evolution is essentially determined by the order statistics of the $\tau$ variables. It is
also easy to see that in that case
each edge $e$ becomes open at time $\tau_e$ or remains closed forever.
From such observations it easily follows that, if $p_w = 0$, replacing the exponential distribution of the
$\tau$ variables by some other continuous distribution, leaves the law of the process unchanged, apart from
an obvious time change.
This is not true if $p_w > 0$. However, as one can easily see from its proof, Theorem \ref{mainthm} remains valid
under such replacement of distribution.

\smallskip\noindent
(ii) Recall that in our basic model an edge $e$ becomes open at the
smallest time $t$ with the property that the subset of times $s < t$
at which $e$ has at least one green end-vertex, has Lebesgue measure
$\tau_e$. A natural modification of this rule is the one where
$e=\lan v, w \ran$ becomes open at the smallest time $t$ with the
property that $v$ is green throughout the interval $[t - \tau_e, t)$
or $w$ is green throughout the interval $[t - \tau_e, t)$. To
illustrate the difference between the rules, consider again the
graph with $\tau$ values and initial colours in Example
\ref{dynex1}. As can be easily checked, under the modified rule the
vertex $2$ will no longer become red due to vertex $5$ but due to
vertex $1$ (and at time $6$ instead of $5$). It turns out that
Theorem 1.1 remains valid for this modified model and that its proof
only needs some small modifications.

\smallskip\noindent
(iii) The third modification is the following model in continuous
space. Consider two homogeneous Poisson point processes $\zeta_G$,
$\zeta_R$ on $\mathbb R^d$, with intensities $\lambda_G=1$,
$\lambda_R \equiv \lambda \in (0, + \infty) $ respectively.  The
points of $\zeta_G$ ({\it green}) are interpreted as sources of
growth, and those of $\zeta_R$ ({\it red}) as sources of
``paralyzing poison''. All other elements of $\R^d$ are uncoloured.
From each source in $\zeta_G$ at time zero a green Euclidean sphere
begins to grow with constant speed 1 (of its radius). When two or
more green spheres intersect, they keep growing in the same manner,
but we say that they have become connected  (are in the same
connected green component). If a growing green sphere hits a red
region, its {\it entire} connected green component (note that this
is a union of spheres) instantaneously gets red and stops growing.
Analogs of the questions for our basic model in Subsection 1.1, in
particular the existence question, arise naturally, but so far we
have made very little progress. Although at first sight there is
some resemblance with the model studied in \cite{HaM}, the arguments
used there seem not to work here.

\smallskip\noindent
(iv)
Consider the following change of rule of the previous model (model (iii) above):
When a green sphere hits a red region, {\it only} the centers of all the spheres
of its connected green component become red; the remaining parts of the spheres
disappear (become uncoloured). This change makes the model much easier to handle
(using an invasion procedure resembling the one we will use in Section 2 for the
case $p_w =0$ of our basic model), but also
considerably less natural, and we will not discuss it in more detail.

\end{subsection}

\end{section}

\begin{section}{Proofs for the case $p_w = 0$}
\begin{subsection}{General properties for the case $p_w = 0$}
The case where $p_w = 0$
is considerably easier
than the general case and serves as
a good introduction to the latter.
We
start with some deterministic observations and claims. Let us first
restrict to a finite graph $G$, with given $\tau$-values and
$c$-values. We assume that at least one vertex has initial colour
red, at least one vertex has initial colour green, and no vertex has
initial colour white. Let $x$ be a vertex with initial colour green,
and let $t(x)$ denote the time at which $x$ becomes red. Let $\Pi$
denote the set of all paths of which the starting point is $x$ and
the end-vertex has initial colour red. It is easy to see that

\begin{equation}
\label{lowerbd}
t(x) \geq \min_{\pi \in \Pi} \max_{e \in \pi} \tau(e).
\end{equation}

Indeed, for each $t$ smaller than the r.h.s. of \eqref{lowerbd}
there is a `cut set' of edges that are still closed at time $t$
and `shield' $x$ from all initially red vertices. It is also quite
easy to see that equality holds in \eqref{lowerbd}. The
algorithmic (and inductive) argument below is not the most direct
one but has the advantage that it gives more, namely  an elegant
and suitable construction of an autonomous region. This particular
construction will almost immediately lead to a proof of parts (a)
and (b) of Theorem \ref{mainthm} for the case $p_w = 0$. The
`algorithm' is a modification (`stopped' version) of the standard
invasion percolation procedure (starting at $x$) described a few
lines above Proposition \ref{inv-perc}. At each stage of the
procedure we have a tree which is a subgraph of $G$. Initially
this tree consists only of the vertex $x$. At each step we
consider all edges that have exactly one end-vertex in the tree,
also called the {\it external edges} of the tree. Among these
edges we select the one with minimal $\tau$-value and add it (and
its external end-vertex) to the tree. The procedure is stopped as
soon as an initially red vertex is added to the tree. Let us
denote this vertex by $R$, and the final tree given by this
procedure by $T(x)$. Let $\tau^*$ be the maximal $\tau$ value on
this tree, and $e^*$ the edge where this maximum is attained.
Removing this edge from the tree $T(x)$ `splits' the tree in two
parts. Let $T_1^*(x)$ denote the part containing $x$.

\medskip\noindent
\begin{claim}
\label{claimpw0}

\smallskip\noindent
(i) The vertex $R$ is responsible for $x$ becoming red. \\
(ii) $x$ becomes red at time $\tau^*$. That is, $t(x) = \tau^*$.
Moreover, $C_g(x)$ (defined in
part (b) of the Theorem) is the set of vertices of $T_1^*(x)$. \\
(iii). Let $\bar E$ denote the set of all edges of which one
end-vertex is a vertex of $T(x)$, different from $R$, and one
end-vertex is not in $T(x)$. Let $\widehat{T}(x)$ be the graph with
the same vertices as $T(x)$ and with all edges that have both
end-vertices in $T(x)$. Then $(\widehat{T}(x), \bar E)$ is
autonomous (with respect to this coloring).
\end{claim}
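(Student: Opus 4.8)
The plan is to prove the three parts of Claim~\ref{claimpw0} together by induction on the number of steps of the stopped invasion procedure, exploiting the fact that for $p_w = 0$ each edge $e$ either opens exactly at time $\tau(e)$ or never opens, and that (once we know no green vertex turns white) a green vertex is green precisely on a time interval of the form $[0, t(x))$. The central auxiliary statement I would isolate is: at any time $s$, the set of vertices that are green and connected to $x$ by open edges is exactly the set of vertices of the invasion tree $T(x)$ added at steps whose $\tau$-value is $< s$, \emph{provided} $s \le \tau^*$; and for $s$ slightly larger than $\tau^*$ this whole set has simultaneously turned red, with $R$ being responsible. Establishing this auxiliary statement essentially forces (i) and (ii), and (iii) follows because the evolution restricted to $T(x)$ only ever consulted the colours of vertices in $T(x)$ and the $\tau$-values of edges with at least one endpoint in $T(x)$.

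First I would record the deterministic monotonicity/locality facts for $p_w = 0$ on a finite graph: an edge opens iff both endpoints are green up to time $\tau(e)$ — equivalently iff at time $\tau(e)^-$ the edge has a green endpoint — so the green open-cluster of $x$ grows by absorbing, at each time $\tau(e)$, any edge $e$ with exactly one endpoint currently in the green cluster of $x$, and it is frozen to red the first time such an edge has its other endpoint red. Comparing this growth rule with the invasion rule: the invasion procedure adds, at each step, the external edge of minimal $\tau$-value; since $\tau$-values are distinct, processing edges in increasing $\tau$-order is exactly the order in which the real dynamics would absorb them, \emph{until} the first initially-red vertex $R$ is reached by the tree, or until an edge whose $\tau$-value exceeds the time at which the cluster already went red is considered. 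I would argue that the stopped invasion tree $T(x)$ is built entirely before anything goes red, because the first initially-red vertex the dynamics encounters along the growing green cluster is precisely the first initially-red vertex the invasion process attaches — here I would use that, up to the moment $R$ is attached, every edge added by invasion has both endpoints green (they are either $x$ or were added as external green endpoints), hence all these edges genuinely open in the dynamics at their $\tau$-times.

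Next, for part (ii): once $R$ is attached via the edge $e$ of $\bar E\cap T(x)$ incident to $R$ at time $\tau(e)$... actually the edge attaching $R$ is an edge of $T(x)$, and I need to identify $\tau^*$ with $t(x)$. The point is that $e^*$, the maximal-$\tau$ edge of $T(x)$, must be precisely the last edge added, i.e. the edge attaching $R$: if some earlier-added edge had a larger $\tau$-value than the edge attaching $R$, the invasion procedure, which always picks the \emph{minimal} external $\tau$, would have attached $R$ earlier — more carefully, I would show the edge incident to $R$ in $T(x)$ is the unique maximal-$\tau$ edge because at the step it was added all other external edges (in particular all edges already in the tree were added at smaller $\tau$) had been dealt with, and any external edge not yet added has $\tau$ larger than the one just added. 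Hence $\tau^* = \tau(e^*)$ is the $\tau$-value of the $R$-attaching edge, $x$ is green throughout $[0,\tau^*)$ and red from $\tau^*$ on, so $t(x) = \tau^*$; removing $e^*$ leaves $x$ in the component $T_1^*(x)$, and the vertices that are green-and-connected-to-$x$ at time $\tau^{*-}$ are exactly those of $T_1^*(x)$ (all of $T(x)$ except $R$; but $R$ is the external endpoint of $e^*$, so $T_1^*(x)$ is $T(x)\setminus\{R\}$), which is by definition $C_g(x)$.

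Finally, for part (iii), autonomy: I would verify directly from the definition in Subsection~1.3 that the evolution restricted to $\widehat T(x)$ depends only on the data $(c,\tau)$ on $\widehat T(x)\cup\bar E$. The invasion construction only ever inspected $\tau$-values of edges with at least one endpoint in $T(x)$; the external edges with their minimal endpoint in $T(x)$ (other than those incident to $R$, whose far endpoint $R$ is itself in $T(x)$) are exactly $\bar E$; and no colour outside $V(T(x))$ was consulted, because the procedure stopped upon first hitting an initially-red vertex $R\in V(T(x))$. So adding any further finite chunk of the ambient graph, as long as it does not add a new edge at a vertex of $T(x)$ — which is guaranteed since $\bar E$ already lists \emph{all} external edges of $T(x)\setminus\{R\}$, and $R$'s additional incident edges cannot influence $T(x)$ because $R$ is already red at the moment it is reached and its colour never changes back — leaves the restricted evolution unchanged. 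The main obstacle I anticipate is the bookkeeping in this last point: one must rule out the scenario where an external edge of $R$ (not in $\bar E$) leads, in a larger graph, to $R$'s cluster growing and somehow feeding back, but this cannot happen since $R$ is red throughout $[\tau^*,\infty)$ regardless, and redness is absorbing and only propagates outward. A secondary subtlety is handling the tie-free assumption (almost surely the continuous $\tau$'s are distinct) so that ``the'' minimal external edge and ``the'' maximal edge $e^*$ are well defined; on the measure-zero tie set one can break ties arbitrarily without affecting any almost-sure statement.
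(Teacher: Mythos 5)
Your proposal rests on a structural assertion that is false: you claim that the maximal-$\tau$ edge $e^*$ of the stopped invasion tree is the last edge invaded, i.e.\ the edge attaching $R$, and consequently that $T_1^*(x)=T(x)\setminus\{R\}$, that every invaded edge opens with both endpoints green, and that the whole tree minus $R$ is absorbed into the green cluster of $x$ and turns red simultaneously at time $\tau^*$. A minimal counterexample: take the path $x-a-R$ with $c(x)=c(a)=$ green, $c(R)=$ red, $\tau(\langle x,a\rangle)=0.9$ and $\tau(\langle a,R\rangle)=0.1$. The invasion from $x$ must first cross $\langle x,a\rangle$ (the only external edge) and only then $\langle a,R\rangle$, so $e^*=\langle x,a\rangle$ is \emph{not} the $R$-attaching edge, and $T_1^*(x)=\{x\}$ while $T(x)\setminus\{R\}=\{x,a\}$. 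In the dynamics $a$ turns red already at time $0.1$, before it is ever connected to $x$ by an open edge, and $x$ turns red at time $0.9$ when $\langle x,a\rangle$ opens onto the already-red $a$; thus $C_g(x)=\{x\}=T_1^*(x)$, consistent with the Claim but not with your identification (your auxiliary statement, read literally, also fails here). Your justification (``the invasion would have attached $R$ earlier'') is invalid because the edge attaching $R$ need not be external at earlier stages; it is only exposed after higher-valued edges have been crossed. In general the invasion crosses the outlet $e^*$ and then wanders through a whole pond $T_2^*(x)$ of edges with values below $\tau^*$; the vertices of $T_2^*(x)$ become red strictly before $\tau^*$ through a cascade originating at $R$, and proving that this cascade occurs, with the \emph{same} $R$ responsible, is precisely the nontrivial content. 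The paper handles it by induction on the number of invasion steps: the external edges $E^*$ present just before $e^*$ is invaded all have values $>\tau^*$, so every path from $x$ to a red vertex uses $e^*$ or $E^*$ and \eqref{lowerbd} gives $t(x)\ge\tau^*$; and the invasion restarted at $y$ (the far endpoint of $e^*$) has final tree $T_2^*(x)$, so the induction hypothesis shows $y$ is red before time $\tau^*$ due to $R$, whence $x$ turns red exactly at $\tau^*$ due to $R$. Your proposal contains no substitute for this recursive step, and parts (i) and (ii) do not follow from the picture you describe.

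The same gap undermines your argument for part (iii). You in effect assume that all edges of $\bar E$ not incident to $R$ have $\tau$-value larger than $\tau^*$, so that no outside influence can reach $T(x)$ before everything there is red. That is true for edges of $\bar E$ attached to $T_1^*(x)$ (they lie in $E^*$), but edges of $\bar E$ attached to vertices of $T_2^*(x)$ can have values well below $\tau^*$. To see that such an edge $e'$ at $v\in T_2^*(x)$ cannot transmit influence one needs $\tau(e')$ to exceed the time at which $v$ becomes red, and this is again supplied by applying the (inductively established) claim to the invasion started at $y$, not by the locality of the construction alone, which only tells you which data the algorithm inspected, not that the true dynamics on a larger graph agrees with it. So the argument needs to be reorganized along the paper's induction (or an equivalent recursive decomposition of $T(x)$ into ponds) for all three parts.
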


\smallskip\noindent
\begin{proof} (of Claim)
The proof of the Claim is by induction on the number of steps in the
above invasion procedure. If the number of steps is $1$ we are in
the situation that the edge incident to $x$ with minimal $\tau$-
value has a red end-vertex, and the above Claim follows easily.
(Note that this case corresponds with the example in the second
paragraph of Subsection 1.3).  Now suppose the number of steps is
larger than $1$. Consider the edge $e^*$ defined above. Let $E^*$
denote the set of external edges, except $e^*$ itself, at the stage
of the procedure immediately before $e^*$ was added. From the
definition of invasion percolation, all edges in $E^*$ have
$\tau$-value larger than $\tau^*$. On the other hand, all edges that
were added after that step have, by definition, $\tau$-value smaller
than $\tau^*$. Therefore the edges in $E^*$ were never added to the
tree. Hence, since $R$ was added after $e^*$ (and was the first red
point added to the tree), it follows that every path in $G$ from $x$
to a red point contains $e^*$ or an edge in $E^*$. Therefore, by
\eqref{lowerbd} we get that
$$t(x) \geq \tau^*.$$

To get the reversed inequality, note the following. Let $y$ denote
the external end-vertex of $e^*$ when $e^*$ was added to the tree.
We already remarked that removing $e^*$ from $T(x)$ `splits'
$T(x)$ in two separate trees, and we denoted the part containing
$x$ by $T_1^*(x)$. Let $T_2^*(x)$ denote the other part. It
follows from the above that $T_2^*(x)$ contains $y$ and $R$. We
will assume that the initial colour of $y$ is green (otherwise the
Claim follows easily). It is easy to see from the above that a
similar invasion procedure as before, but now starting at $y$
instead of $x$, has as its final tree the tree $T_2^*(x)$. By the
induction hypothesis we have that $y$ becomes red at the time
which is equal to the maximal edge value in $T_2^*(x)$ and hence
before time $\tau^*$, and that $R$ is responsible for $y$ becoming
red. Also note that, from the earlier observations, just before
time $\tau^*$ there is an open path from $x$ to the end-vertex
$\neq y$ of $e^*$. Since $e^*$ becomes open at time $\tau^*$ it
follows that $x$ becomes red at time $\tau^*$. Moreover, since $R$
is responsible for $y$ becoming red, it is also responsible for
$x$ becoming red. This (and the earlier made observation that all
external edges $\neq e^*$ of $T_1^*(x)$ have $\tau$ value larger
than $\tau^*$)) completes part (i) and (ii) of the proof of
Claim~\ref{claimpw0}. Similar arguments show part (iii).
\end{proof}

\medskip\noindent
Now we are ready to handle the case where $G$ is infinite. If $G$ is
infinite and $p_r > 0$, it is not a priori clear that the process
described in Subsection 1.1 is well-defined. However, the above
invasion procedure and the corresponding Claim now give us the
instrument to define it and to give a proof of Theorem \ref{mainthm}
in this particular case.
\end{subsection}

\begin{subsection}{Proof of Theorem \ref{mainthm} for the case $p_w =
0$} For each green vertex $x$ simply run the invasion procedure
starting from $x$. Since the initial colours and the $\tau$
variables are independent, we have, at each step in the invasion
from $x$, probability $p_r$ of hitting a red vertex (independently
of the previous steps in this invasion). Hence the invasion
procedure starting at $x$ stops with probability $1$, and (by part
(iii) of Claim~\ref{claimpw0}) yields an autonomous region
containing $x$. Since the graph has countably many vertices, this
yields a construction of the process on $G$ and completes the
proof of part (a) of the theorem. Moreover it shows that
Claim~\ref{claimpw0} also holds (a.s.) for $G$. Further, the
number of steps in the invasion procedure from an initially green
vertex clearly has a geometric distribution: the probability that
it is larger than $n$ is $(1-p_r)^n$. Since (by part (ii) of
Claim~\ref{claimpw0}) $|C_g(v)|$ is at most the number of steps in
the invasion procedure,
part (b) of the theorem follows. \\
{\it Proof of part (c)}: For each pair of vertices $x,y$, let
$I(x,y)$ denote the event that $x$ is initially green and that $y$
is initially red and responsible for $x$ becoming red. It follows
immediately from the above that for all vertices $x$ and all $m$
\begin{equation}
\label{Rbd} \sum_{y : d(x,y) \geq m} P(I(x,y)) \, = \, P\left(d(x,
R(x)) \geq m \right) \le (1-p_r)^m.
\end{equation}
Further, using that $G$ is a Cayley graph, the `mass transport
principle' (see e.g. Section 7.1 in \cite{LyP} or \cite{HPS}) gives:
$$ P\left(D(w) \mbox{has radius } \geq m\right) \leq \sum_{v \,: \, d(v,w) \geq m} \! \! P(I(v,w)) =
\sum_{v \,: \, d(v,w) \geq m} \! \! P(I(w,v)),$$ which by
\eqref{Rbd} is at most $(1-p_r)^m$. This completes the proof of
part (c) of the theorem.

\smallskip\noindent
{\it Proof of part (d)}. As we will see, this follows from earlier
observations, together with a block argument which is quite
similar to one in percolation theory, due to Kesten (see
\cite{K}). Below we denote the $d-$dimensional cubic lattice
simply by $\Z^d$.

Let, as before, $T(x)$ denote the tree produced by the invasion
procedure starting at $x$. We want to prove exponential decay for
$P(|D(v)| > n)$, where $v$ is an initially red point. Without loss
of generality we take $v = {\bf 0}$. We say that a finite set $W$ of
vertices containing $\bf 0$ is a lattice animal (abbreviated as
l.a.) if for all $w \in W$ there is a path in $\Z^d$ from $\bf 0$ to
$w$ of which every vertex is in $W$. From the definitions (and
since, as we saw in (c), $D(\bf 0)$ is a.s. finite), it is clear
that $D(\bf 0)$ is a lattice animal. Let $L$ be an even integer and
consider the partition of $\mathbb{Z}^d$ into cubes $Q_L (x) :=
[-L/2, L/2 )^d + L x$, $ x \in \Z^d$. We say that $x \in \Z^d$ is
{\it fine} if $Q_L(x) \cap D({\bf 0}) \neq \emptyset$. Let $V_F$
denote the set of all vertices that are fine. Since $D({\bf 0})$ is
a lattice animal, $V_F$ is also a lattice animal. Further, we say
that $x \in \Z^d$ is {\it proper} if $Q_L(x)$ contains a vertex $y$
with $|T(y)| > L/4$, and write $I( x \mbox{ is proper })$ for the
indicator function of the corresponding event. (Here $T(\cdot)$ is
as defined in the invasion procedure earlier in this Section).
Finally, a subset of $\Z^d$ is proper if every element in the set is
proper. It is clear that for every $x \neq \bf 0$, if $x$ is fine,
then $x$ is proper. It is also clear that if $D(\bf 0)$
contains vertices outside $Q_L(\bf 0)$, then $\bf 0$ is also proper.
Recall from Claim~\ref{claimpw0}(iii) that for each tree $T$ in $\Z^d$ and each vertex $y$, the
event $\{T(y) = T\}$ depends only on the $c$ values of the vertices
of $T$ and the $\tau$ values of the edges that have at least one
end-vertex in $T$. From this it easily follows that the process
$\left(I( x \mbox{ is proper }), \,  x \in \Z^d\right)$ is
$2$-dependent (see e.g. \cite{G} for this notion). Let $\ep=\ep(L) = \ep(L,d)$ be the probability that a
given vertex is proper. Since, for each $y$,  the distribution of
$|T(y)|$ is geometric (and $|Q_L(y)|$ is polynomially bounded in
$L$) it is clear that for fixed $d$
$$\ep(L,d) \ra 0 \mbox{ as } L \ra \infty.$$
The above mentioned $2$-dependence gives that there is a constant $C_1 = C_1(d)$ such that
for every set $W \subset \Z^d$

\begin{equation}
\label{1dp} P(W \mbox{ is proper }) \leq \ep^{\frac{|W|}{C_1}}.
\end{equation}

Finally, we use that there is a constant $C_2 = C_2(d)$ such that
the number of lattice animals of size $m$ is at most $C_2^m$, see~\cite{G}.
Together, the above gives that (noting that each l.a. of size $\geq
m$ contains a l.a. of size $m$) that for $n$ large enough (depending on $L$),
\begin{eqnarray}
& & P(|D({\bf 0})| > n) \leq P\left(\exists \mbox{ a proper l.a.
of size } \lceil\frac{n}{|Q_L|}\rceil \right)  \\ \nonumber \leq & &
C_2^{\frac{n}{|Q_L|}+1} \, \ep(L)^{\frac{n}{|Q_L| C_1}} \\
\nonumber = & & C_2 \, \left[ \left(C_2
\,\,\ep(L)^{\frac{1}{C_1}}\right)^{1/Q_L}\right]^n.
\end{eqnarray}
Taking $L$ so large that $C_2 \,\, \ep(L)^{(1/C_1)} < 1$ completes
the proof of part (d). This completes the proof of Theorem
\ref{mainthm} for the special case where $p_w = 0$.
\end{subsection}

\begin{subsection}{Proof of Proposition \ref{inv-perc} and Theorem \ref{uni-bound}}
We first prove Proposition \ref{inv-perc}. We say that an edge is
$p$-open if $\tau_e < p$. Define $p$-open paths and $p$-open
clusters in the obvious way. To prove the Proposition we will derive
suitable lower and upper bounds for the
l.h.s. of \eqref{invperc} in terms of an expression of the form of its r.h.s. \\
The lower bound is very easy: Since $\tau_{\hat e} > p_c$ (see the
paragraph below \eqref{hpseq}), it follows immediately that (a.s)
the region which is already invaded at the step where $\hat e$ is
invaded, contains all the vertices of the $p_c$-open cluster
of $O$. Hence the l.h.s of \eqref{invperc} is larger than or equal to the r.h.s.\\
The upper bound is more complicated.
We use the standard percolation notation $\theta(p)$ for the probability that $O$ is in
an infinite $p$-open cluster.\\
Define, for each $p$ and $n$, the following two events:

\begin{eqnarray*}
A_{n,p} = \{\exists \mbox{ a } p \mbox{-closed circuit with
diameter}\geq n &\mbox{in the dual lattice}\\\mbox{ that contains }
O \mbox{ in its interior}\}.
\end{eqnarray*}

$$D_p = \{O \mbox{ belongs to an infinite } p \mbox{-open cluster }\}.$$
Note that $P(D_p) = \theta(p)$ and that if $p_1 < p_2$,
then $D_{p_1} \subset D_{p_2}$ and $A_{n, p_2} \subset A_{n, p_1}$.

Let $\hat \tau = \tau_{\hat e}$.
Let $p'$ be some number between $p_c$ and $1$.
The following observation is straightforward.

\smallskip\noindent
{\it Observation}\\
(a) If $\hat \tau > p'$ and $\hat R  \geq n$, then there is a $p > p'$
such that the event $A_{n,p}$ occurs. \\
(b) If $\hat \tau <p'$, then there is a $p<p'$ such that $D_p$ occurs.

\medskip\noindent
Let, for $ p > p_c$, $L(p)$ be the correlation length
(=$L(p,\ep_0)$) as defined in Section 1 in the paper by Kesten
(1987) on scaling relations. (See \cite{K2}). That is, $L(p)$ is
the smallest $n$ such that the probability that there is a
$p$--open horizontal crossing of a given $n \times n$ box is
larger than $1 - \ep_0$. Here $\ep_0$ is an appropriately
(sufficiently small) chosen positive constant. (From this
definition it is clear that $L(p)$ is non-increasing in $p$ on the
interval $(p_c,1]$). It is well-known (see (2.25) in \cite{K2} and
the references preceding that equation) that there are constants
$C_1 > 0$ and $C_2
> 0$ such that for all $p >p_c$ and all $n$,

\begin{equation} \label{ke1}
P_p(A_{n,p}) \leq C_1 \, \exp\left(- \frac{C_2 n}{L(p)}\right).
\end{equation}

Further, Theorem 2 in \cite{K2} says that there is a constant $C_3
> 0$ such that, for all $p > p_c$,

\begin{equation} \label{ke2}
\theta(p) \leq C_3 \Pc\left(O \leftrightarrow \partial B(L(p))\right).
\end{equation}

Now take, for $p'$, the supremum of those $p$ for which $L(p) >
n/(C_4 \log n)$, where $C_4$ is a positive constant that will be
appropriately chosen later.
Obviously,
\begin{equation}
\label{obv-eq}
P(\hat R \geq n) \leq P(\hat R \geq n, \, \hat \tau > p')
+ P(\hat \tau < p').
\end{equation}
The first term in the r.h.s of \eqref{obv-eq} is, by Observation
(a) above and the `nesting' property of the events $A_{n,p}$
(stated in the sentence below the definition of these events),
smaller than or equal to
\begin{equation}
\label{obv1} \lim_{p \downarrow p'} P(A_{n,p}) \leq \limsup_{p
\downarrow p'} C_1 \exp(- \frac{C_2 n}{L(p)}) \leq C_1 \exp(- C_2
C_4 \log n),
\end{equation}
where the first inequality follows from \eqref{ke1} and the second inequality from the definition of
$p'$.

The second term of \eqref{obv-eq} is, by Observation (b) and the
`nesting' property of the events $D_p$, smaller than or equal to
\begin{equation}
\label{obv2}
\lim_{p \uparrow p'} \theta(p) \leq \limsup_{p \uparrow p'} C_3 \Pc\left(O \leftrightarrow \partial B(L(p))\right)
\leq C_3 \Pc\left(O \leftrightarrow \partial B(\frac{n}{C_4 \log n})\right),
\end{equation}
where the first inequality follows from \eqref{ke2} and the second follows by (again) using the
definition of $p'$.
Putting \eqref{obv-eq}, \eqref{obv1} and \eqref{obv2} together we have

\begin{equation} \label{kec}
P(\hat R \geq n)
\leq C_3 \Pc\left(O \leftrightarrow \partial B(\frac{n}{C_4 \log n})\right)
+ C_1 \, \exp(- C_2 C_4 \log n).
\end{equation}

It is believed that $\Pc(O \leftrightarrow \partial B(n))$ has a
power law behaviour. This has only been proved for site percolation
on the triangular lattice. However, for the percolation models we
are considering, we do know that this function of $n$ has power-law
lower and upper bounds. As a consequence we can choose $C_4$ so
large that the second term in the r.h.s. of \eqref{kec} is (for all
large enough n) smaller than the first term. Finally, it follows
quite easily from RSW arguments (see e.g. Sections 11.7 and 11.8 in \cite{G})
that $\Pc\left(O \leftrightarrow
\partial B(n/C_4\log n )\right)\approx\Pc\left(O \leftrightarrow
\partial B(n)\right)$. This completes the proof of Proposition
\ref{inv-perc}. $\Box$

\smallskip\noindent
Now we are ready to prove Theorem \ref{uni-bound}.  The invasion
procedure in Subsection 2.1, which was used in the proof of
Theorem \ref{mainthm}, differs from the `ordinary' invasion
percolation model (described in the paragraphs preceding
Proposition \ref{inv-perc}, in that is stops as soon as the growing
tree `hits' a red vertex. There is strictly speaking another
difference: the $\tau$ values in the former case were
exponentially distributed and those in the latter case were
uniformly distributed on $(0,1)$. However, that difference clearly
doesn't matter, and in the rest of this proof we assume the $\tau$
variables to be uniformly distributed on $(0,1)$. Let us call the
former procedure a `stopped' invasion procedure (with parameter
$p_r$), and the latter an ordinary invasion procedure. All these
procedures (the stopped procedures with $p_r$ varying between $0$
and $1$, and the ordinary procedure) can be coupled in the
following natural way: Assign to each vertex $v$, independent of
the others, (and of the $\tau$ variables) a random variable
$\rho(v)$, uniformly distributed on the interval $(0,1)$. When we
now do invasion percolation (w.r.t. the $\tau$ variables) and stop
when we hit a vertex with $\rho$ value smaller than $p_r$, this
corresponds exactly with the above mentioned stopped invasion with
parameter $p_r$. In this coupled setting, the set $C_g(O)$ for the
stopped model with parameter $p_r$ is clearly non-increasing in
$p_r$, and the union of these sets over all the values $p_r>0$ is
exactly the region mentioned in Proposition \ref{inv-perc}. Theorem
\ref{uni-bound} now follows from this proposition. $\Box$

\end{subsection}

\end{section}

\begin{section}{Proof for the case $p_w > 0$}
In this section we prove Theorem \ref{mainthm} for the case $p_w>0$.
Recall that in the
special case where there are no white vertices (see Section 2) there
was an elegant invasion procedure which produced, with probability
$1$, a finite autonomous set containing a given vertex or edge. This
is much more complicated in the general case, when there are white
vertices. We still have a procedure which, if it stops, gives an
autonomous set containing, say, a given vertex $x$. This algorithm
starts as before, with one invasion tree, which initially consists
only of the vertex $x$, and which grows by invading the edge with
minimal $\tau$ value. However, when we hit a `fresh' white vertex
$y$ we have to investigate the `space-time paths from outside' that
have possibly influenced $y$. This is done by starting new invasion
trees in the green vertices on the boundary of the white cluster of
$y$. As before, an invasion tree stops when it invades a red vertex.
In the situation in the previous Section this also marked the end of
the algorithm. But in the current situation it only marks the end of
one invasion tree, while the others keep growing and creating new
invasion trees. In this way the algorithm might go on forever.
However, we show that under the condition in Theorem \ref{mainthm} the
algorithm, which is described more precisely below, does end.

\smallskip\noindent
The input is a connected graph $G = (V,E)$, the initial colours $c(v), v \in
V$ and the opening times $\tau(e), e \in E$, and the vertex $x$ or
edge $e$ for which we want to find an autonomous region.  Here we
only handle the case concerning a vertex $x$ and we assume that $x$
is green; the other cases can be done in a very similar way. For the
moment it suffices to restrict to finite graphs. The algorithm will
produce an autonomous subgraph $H$ and, for some vertices $v$ of
$H$,  non-negative numbers $t_g(v)$ and $t_r(v)$, and for some edges
$e$ of $H$ a positive number $t(e)$. Here $t_g(v)$ and $t_r(v)$ will
denote the time at which $v$ becomes green and red, respectively.
The value $t(e)$ will be the time when $e$ becomes open. It will
be clear from the description below that, at each stage of the
algorithm the edges to which a $t$-value has been assigned form a
collection of disjoint trees. Each tree in this collection has one of two labels:
`active' or `paralyzing'.
How these labels are assigned is described in Subsection
\ref{Desc} below.
The collection of active trees is
denoted by $\calT_a$ and the collection of paralyzing trees by $\calT_p$.
As we will see, new active or paralyzing trees are `created' during
the algorithm, and active trees can merge with each other or
with a paralyzing tree. In the former case the new tree is active, in the latter case
it is paralyzing.

The set of edges which have at least one
end-vertex in an active tree (and not both end-vertices in the same
active tree) is denoted by $\calE$. With some abuse of terminology
we say that a vertex is in $\calT_a$ if it is a vertex of some tree
in $\calT_a$. A similar remark holds w.r.t. $\calT_p$.

Apart from the above, we need the following auxiliary variables and structures,
which will be assigned during the algorithm.

The first auxiliary structure we mention here is a set $S$, which
can be interpreted as the set of all initially white vertices that `have been
seen until the current stage' in the algorithm.
We say that a vertex `is registered' if it is in $\calT_p$, $\calT_a$ or $S$.
Further, to each
edge $e \in \calE$ (as introduced above) a value $t_1(e)$ will be
assigned, which can be interpreted as a tentative, possible value
for $t(e)$.

Finally, the following definition will be important:
The {\it white cluster} $C_w(v)$ of a vertex $v$ is defined as the maximal connected subset of $G$ of
which all vertices $y$ have initial colour $c(y) = $ white. (Note that this notion, in contrast with
the notion of green clusters (defined in
Section 1) does not involve the state (open/closed) of the edges.
The boundary of the white cluster of $v$, denoted by $\partial C_w(v)$, is the set of all vertices that
are not in $C_w(v)$ but have an edge to some vertex in $C_w(v)$.
If $c(v)$ is not white, then $C_w(v)$ and $\partial C_w(v)$ are empty.

\begin{subsection}{Description of the algorithm} \label{Desc}
Using the notions above we are now ready to describe the algorithm. It starts with action 1 below,
followed by an iteration of (some of) the other actions. Recall that $c(x)$ is green.

\medskip\noindent
{\bf 1.} {\bf Initialization of some of the variables and structures.} \\
Set $\calT_p = \emptyset$, $\calT_a = \{\{x\}\}$, and $S = \emptyset$. \\
Set $t_g(x) = 0$, $\calE $ as the set of all edges incident to $x$,
and $t_1(e) = \tau(e)$ for all edges $e\in \calE$.

\smallskip\noindent
{\bf 2.} {\bf Selection of minimal external edge.} \\
Remove from $\calE$ all edges of which both endpoints are in the same tree of $\calT_a$. \\
{\it Comment: such edges can have resulted from some af the actions below} \\
If $\calE = \emptyset$, stop.
Otherwise, let $e$ be the edge in $\calE$ with minimal $t_1$-value. \\
Write $e = \lan v,y \ran$ with $v$ in $\calT_a$. (This way of
writing is of course not unique if both end-vertices
of $e$ are in $\calT_a$ but that doesn't matter). Let $T$ denote the tree in $\calT_a$ of which $v$ is a vertex. \\
If $y$ is not in $\calT_a$, $\calT_p$ or $S$ (that is, $y$ is `fresh') go to 2a, else go to 2b.

\smallskip\noindent
{\bf 2a. Fresh vertex.} \\
Determine $c(y)$. \\
If $c(y) =$ red, set $t(e) = t_1(e)$ and go to 3a. \\
If $c(y) =$ green, set $t(e) = t_1(e)$ and go to 4. \\
If $c(y) =$ white, go to 6.

\smallskip\noindent
{\bf 2b. Registered vertex.} \\
Set $t(e) = t_1(e)$. \\
If $y$ is in $\calT_p$ go to 3b. \\
If $y$ is in $\calT_a$ go to 5. \\
Else go to 7.

\medskip\noindent
{\bf 3a. Fresh red.} \\
{\it Comment: This case can be handled in almost the same way as 3b
below and therefore, with an `administrative trick',
we simply turn it into the latter case:} \\
Set $t_r(y) = 0$.
Add to $\calT_p$ the tree which consists only of the vertex $y$. \\
Go to 3b.

\medskip\noindent
{\bf 3b.  Active tree $T$ becomes paralyzed.}
Set $t_r(z) = t(e)$ for all vertices $z$ of $T$. \\
Remove from $\calE$ all edges of which one end-vertex is in $T$
and the other end-vertex is not in $\calT_a$. Let $T'$ be the tree
in $\calT_p$ of which $y$ is a vertex. Replace, in $\calT_p$, the
tree $T'$ by that obtained from `glueing together' $T$ and $T'$
via the edge $e$.
Remove $T$ from $\calT_a$. \\
Go to 2.

\smallskip\noindent
{\bf 4. Fresh green. } \\
Set $t_g(y) = 0$. For each edge $e'$ incident to $y$ that was not
yet in $\calE$: add $e'$ to $\calE$ and set $t_1(e') = \tau(e')$.
Replace, in $\calT_a$, the tree $T$ by a new tree obtained from
glueing $y$ to $T$ by the edge $e$. \\
Go to 2.

\smallskip\noindent
{\bf 5. Two active trees join.} \\
Let $T' \in \calT_a$ be the active tree of which $y$ is a vertex.
Replace, in $\calT_a$, the trees $T$ and $T'$ by a new tree
obtained from `glueing together' $T$ and $T'$ with the edge $e$. \\
Go to 2.

\smallskip\noindent
{\bf 6. Fresh white.} \\
Add every vertex of $C_w(y)$ to $S$. \\
For each vertex $z$ in $\partial C_w(y)$ that has $c(z) =$ green and is not
in $\calT_a$ or $\calT_p$, do the following: \\
Set $t_g(z) = 0$; add the tree $\{z\}$ to $\calT_a$; add  to $\calE$
each edge $e'$ incident to $z$ that is not yet in $\calE$, and set
$t_1(e') = \tau(e')$.

\smallskip\noindent
For each vertex $z$ in $\partial C_w(y)$ that has $c(z) =$ red and is not
in $\calT_p$, set $t_r(z) = 0$ and add the tree $\{z\}$ to $\calT_p$.\\
Go to 2.

\smallskip\noindent
{\bf 7. Registered white.} \\
Set $t_g(y) = t(e)$. Replace, in $\calT_a$, the tree $T$ by the
tree obtained from $T$ by `glueing' the vertex $y$ to it by the
edge $e$. For each edge $e' = \lan y, z \ran$ of $y$ that is not
in
$\calE$, add it to $\calE$ and set $t_1(e')$ as follows: \\
If $z$ is in $\calT_p$ but $c(z) \neq$ red, set
\begin{equation}
\label{adjust} t_1(e') = t(e) + \tau(e') -(t_r(z) - t_g(z)),
\end{equation}
else set
$$t_1(e') = t(e) + \tau(e').$$
{\it Comment: The subtracted term in \eqref{adjust} accounts for the time that $e'$ already had
a green end-vertex. See also the Remark at the end of Subsection 3.2} \\
Go to 2.

\medskip\noindent
{\bf Remark:} \\
Note that initially there is only one active tree and that new
active trees are only formed in part 6 of the algorithm. Also note
that initially there are no paralyzing trees; these can be formed in
part 6 and in part 3a. Moreover, 3a always leads, via 3b, to the
elimination of an active tree. Now consider the case that $G$ has no
vertices with initial colour white. Then the algorithm never enters
part 6 (neither part 7) so that throughout the algorithm there is
one active tree until a red vertex is `hit'. From such
considerations it is easily seen that in this case the algorithm
reduces to the one described in Section 2.
\end{subsection}

\begin{subsection}{Correctness of the algorithm}
If $G$ is finite the above algorithm will clearly stop. Moreover, we
claim that if $G$ has at least one vertex with initial colour red,
we have the following situation at the end of the algorithm: The set
of active trees $\calT_a$ is empty. The set $\calT_p$ contains one
or more trees, and the vertex $x$ is in one of them. Each of these
trees has exactly one vertex with initial colour red, and this
vertex is `responsible' for  the other vertices in that tree to
become red. The following pair, $(H, \bar E)$, is autonomous: The
vertices of $H$ are the vertices in $\calT_p$ together with all
vertices in $S$. The edges of $H$ are all edges of which both
end-vertices are in the above set. The set $\bar E$ is the set of
all edges of which one end-vertex is a vertex
$v$ of $H$ with $c(v) \neq$ red, and the other end-vertex is not in $H$.
Further, each initially green vertex $v$ of $H$ becomes red at
time $t_r(v)$.

The `correctness' of the above algorithm (that is,
the above claim) can, in principle, be proved by induction,
e.g. on the number of edges. Instead of giving a full proof (which would be extremely tedious)
we present the key ideas/observations ((a) - (d) below) to be used in such proof.

\smallskip\noindent (a) As in many induction proofs it is useful, or even necessary, (for carrying out
the induction step)  to generalize
the statement one wants to prove. In the current situation this generalization is as follows: In the above algorithm,
information is stored in the administration when the vertices involved are `encoutered' by
the algorithm. In particular, in action 6 a white cluster and its boundary are
`stored' because a vertex of the white cluster had been encountered (as endpoint of the edge selected in action 2).
The same algorithm still works if at one or more stages of the algorithm such information about a white cluster (and its
boundary) is stored `spontaneously' (that is, without this cluster having been encoutered in the sense above).

\smallskip\noindent
(b) The main observation for doing induction on the number of edges is the following:
Let, among all edges with at least one initially
green endpoint, $\hat e$ be the one with minimal $\tau$ value. Let $\hat x$ and $\hat y$ denote its endpoints.
We may assume that $\hat x$ is initially green. It is clear that the first thing that happens in the `real' growth
process is the opening of $\hat e$ (namely, at time $\tau(\hat e)$). It is alo clear that from that moment
on the growth process behaves
as if starting on a graph with one vertex less, namely the graph obtained by `identifying' (or glueing together) $\hat x$
and $\hat y$ (with an obviously assigned colour: green if $c(y)$ is white or green; red if $c(y)$ is red).

\smallskip\noindent
(c) To carry out the induction step it has to be shown that the algorithm has a property analogous to
that for the real
process described in (b) above. That this is indeed the case, can be seen as follows: As long as $\hat x$ and $\hat y$
are not `registered' in the algorithm, the algorithm behaves the same as it would behave
for the graph obtained after the identification described in (b).
Moreover, one can easily see from the description of the algorithm that immediately after one of these vertices is
registered,
the other one also is, and that they are immediately `attached to each other' (by the edge $\hat e$) in the same
tree.

\smallskip\noindent
(d) The following side remark must be added to (c) above: Suppose
that $\hat y \in C_w(y)$ in action 6 at some stage of the
algorithm.  This cluster $C_w(y)$ could be larger than that in the
graph obtained by identifying $\hat y$ and $\hat x$. This means
that in that step `more information is collected' than in the
situation where $\hat x$ and $\hat y$ would be identified from the
beginning. It is exactly for this issue that the generalized
algorithm (and claim) in (a) was given.
\end{subsection}

\begin{subsection}{Proof of Theorem \ref{mainthm}}
\begin{proof}
 It follows, in the same way as in the case $p_w = 0$, that on
an infinite graph the dynamics is well-defined provided the
algorithm stops with probability $1$. We will show that, under the
condition \eqref{key} in the statement of the Theorem, the algorithm
indeed stops. In fact, the arguments we use will give something
stronger, namely Proposition \ref{prop1} below, from which not only
part (a) of Theorem~\ref{mainthm} follows, but which we will also use to prove
part (b), (c) and (d).
\begin{prop}\label{prop1}
Under the condition of Theorem~\ref{mainthm}, we have that, for each $x$, the
above mentioned algorithm stops, and, moreover, the distributions of
the volume and the diameter of the graph $H$ defined above have an
exponential tail.
\end{prop}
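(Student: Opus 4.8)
The plan is to control the growth of the administrative graph $H$ by a branching/domination argument, exploiting the fact that each invasion tree behaves, up to the moment it is paralyzed or meets another tree, exactly like the stopped invasion procedure of Section 2, while new active trees are created only through action 6, i.e. only along the boundaries of white clusters that are ``swallowed'' by the procedure. The key quantitative input is condition \eqref{key}: each active tree, viewed on its own, invades a fresh red vertex after a number of steps with a geometric distribution (parameter $p_r$), so it has only finitely many steps before paralysis; meanwhile, each step of an active invasion can only attach a new active tree if it hits a fresh white vertex, and the expected number of \emph{extra} active trees born when that happens is controlled by $(D-1)$ times the expected size of a white cluster in site percolation at density $p_w$, which is at most $(D-1)\xi(p_w)$ boundary green vertices.

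The first step is to set up a \emph{supercritical-to-subcritical} comparison. I would assign to each active tree a ``generation'' according to how it was created ($x$'s tree is generation $0$; a tree created in action 6 triggered by a white cluster seen from a generation-$k$ tree is generation $k+1$), and bound the total number of trees by a Galton–Watson process. The number of children of a given tree is at most the total number of green boundary vertices of all white clusters encountered during that tree's finitely many invasion steps. By independence of the colours and $\tau$'s, and because an active tree does at most $\mathrm{Geom}(p_r)$ steps before being paralyzed (each step independently hitting red with probability $\geq p_r$, using $p_g + p_w < 1$), the expected number of children is at most
\[
\sum_{k\ge 1} (1-p_r)^{k-1}\, p_w \cdot \mathbb E[\,\#\partial C_w \mid \text{a fresh white hit}\,]
\le \frac{1}{p_r}\cdot (D-1)\,\xi(p_w),
\]
after one checks that a white cluster of volume $m$ has at most $(D-1)m$ green boundary vertices (each vertex of $C_w$ has at most $D$ neighbours, one of which lies back in the cluster along the path to the hitting point; this is where the bounded degree $D$ enters). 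Hence condition \eqref{key}, $(D-1)\xi(p_w)<p_r$, makes the GW mean strictly less than $1$: the process of active trees is subcritical, so a.s.\ finitely many trees are ever created and the algorithm stops. For the exponential tails one upgrades this: the per-tree number of invasion steps has an exponential tail (geometric), the size of each encountered white cluster has an exponential tail in the subcritical site-percolation regime $p_w$ with $\xi(p_w)<\infty$ (standard, via the domination/first-moment bound on lattice-animal counts, or directly since $\xi(p_w)<\infty$ forces exponential decay of cluster size on a transitive/bounded-degree graph — one may need a mild strengthening here, see below), and the total population of a subcritical GW tree whose offspring distribution has an exponential tail again has an exponential tail. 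Summing contributions ($|H|$ is bounded by the total number of invasion steps plus the total volume of all swallowed white clusters, and the diameter by a similar sum along the tree structure) gives the exponential tail for both $|H|$ and $\mathrm{diam}(H)$.

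The main obstacle, I expect, is making the branching domination \emph{rigorous} despite the dependences introduced by the bookkeeping: when a tree is paralyzed in action 3b it can force other active trees (those glued to the same paralyzing tree later) to stop, and two active trees can merge (action 5), so the ``children'' of different trees are not genuinely independent, and the exploration does not reveal fresh randomness cleanly at every step (action 7 reuses a vertex already in $S$, and the adjusted time \eqref{adjust} correlates things). The standard fix is to be generous: ignore all merging and premature stopping (these only \emph{decrease} $H$), and at each invasion step expose a fresh independent colour for the candidate vertex, stochastically dominating the true process by one in which every active tree runs its full independent $\mathrm{Geom}(p_r)$ lifetime and every fresh white hit spawns an independent copy of a white cluster with its full green boundary — this dominating process is exactly a GW tree with the offspring law above, with all the claimed tail properties. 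A second, smaller point to watch is that $\xi(p_w)<\infty$ for \emph{all} vertices uniformly (the $\sup_v$ in the definition of $\xi$) is what lets the bound be uniform over where in $G$ an active tree currently sits; and that for the exponential tail of $|H|$ one genuinely needs exponential decay of the white-cluster-size distribution, which on a bounded-degree graph does follow from $\xi(p_w)<\infty$ by the usual subcritical argument, but this implication should be invoked or proved explicitly rather than taken for granted.
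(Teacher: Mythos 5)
Your proposal is correct in substance, and it reaches the conclusion under the same hypothesis, but it organizes the argument differently from the paper. You dominate the family of active trees by a subcritical Galton--Watson forest: each tree survives a $\mathrm{Geom}(p_r)$-dominated number of fresh-vertex steps, children are spawned only at fresh white hits, and the offspring mean is at most $(D-1)\xi(p_w)/p_r<1$ under \eqref{key}; exponential tails then come from total progeny of a subcritical GW tree with exponentially tailed offspring. The paper never builds a genealogy of trees: it counts globally, step $k$ being the $k$th entry into action 2a, and tracks the net change $\alpha_k$ of the \emph{number} of active trees, bounded by $(D-1)|C_w(y_k)|-\I_{\{c(y_k)\ \text{red}\}}$; condition \eqref{key} makes the conditional mean of $\alpha_k$ strictly negative, the stopping time $N$ (first time the active-tree count hits zero) gets an exponential tail via conditional stochastic domination by independent copies $\alpha'(v)$ and a standard large-deviation bound (see \eqref{aldbd}--\eqref{supbd}), and $|H|\le 1+\sum_{k\le N}\eta_k$ with \eqref{etdbd} finishes the volume bound. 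The two arguments rest on the same inequality --- your offspring mean below $1$ is exactly the paper's negative drift after grouping steps by the tree that performs them --- but the paper's per-step walk avoids precisely the bookkeeping you flag as the main obstacle: since active trees merge (action 5), are absorbed into paralyzing trees (3b), and revisit registered whites (7), a ``lifetime of a tree'' is not intrinsically defined in the true process, whereas the per-step active-tree count is, and the only probabilistic input needed is that, given $\calF_k$, the fresh colour has law $(p_w,p_r,p_g)$ and $|C_w(y_k)|$ is stochastically dominated by an unconditioned white cluster. Your route buys a more transparent explanation of why \eqref{key} is the natural condition, at the price of a coupling you would still have to write out, and that coupling is in the end the same conditional-domination lemma. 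One clarification: the ``mild strengthening'' you worry about for the white-cluster tail is unnecessary, since \eqref{key} gives $(D-1)p_w\le(D-1)\xi(p_w)<p_r\le 1$, so white clusters are dominated by a subcritical branching process and \eqref{expcw} holds directly, which is exactly how the paper obtains it.
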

\begin{proof} By the $k$th step of the algorithm we mean everything done by the
algorithm between the $k$th and $k+1$th time the algorithm
`enters' part 2a in the description in Subsection 3.1. Recall that
we say that a vertex is registered if it is in $\calT_a$,
$\calT_p$ or $S$. Let $\nu_k$ be the number of registered vertices
at the beginning of step $k$. (In particular, $\nu_1 = 1$.) If the
algorithm is already terminated during step $j$ for some $j < k$,
we set $\nu_k$ equal to the number of registered vertices at the
moment of termination. Further, let $y_k$ denote the `fresh'
vertex (i.e. the vertex $y$ in part 2a of the description in
Subsection 3.1) treated in step $k$ of the algorithm. (In
particular, $y_1$ is the end-vertex of the edge incident to $x$
with minimal $\tau$ value). Let $\eta_k = \nu_{k+1} - \nu_k$.
Further, let $\al_k$ denote the net increase of the number of active trees during step $k$ of the algorithm.
If the algorithm is terminated during step $k$, we set $\al_k = -1$.
(This choice is somewhat arbitrary; it is simply a suitable choice to ensure that certain statements below
hold for all $k$).

Note that the initial colours of the vertices are independent
random variables, each being white, red or green with probability
$p_w$, $p_r$ and $p_g$ respectively. It is clear from the
algorithm that we may consider the colour of a vertex as `hidden'
until the moment the vertex becomes registered. Let $\calF_k$ be
all information obtained by the algorithm until the beginning of
step $k$
(including the identity but not the colour of $y_k$).\\
Let
$N = \min\{n \, : \, 1 + \sum_{k=1}^n \alpha_k = 0\}$.
It is easy to see that if $N$ is finite the algorithm stops during or before step $N$, and the number
of vertices in the above defined graph $H$ is
\begin{equation}
\label{Hbd}
1 + \sum_{k=1}^N \eta_k.
\end{equation}

Note that if $c(y_k)$ is white, the procedure is sent to part 6, and
the newly registered vertices in step $k$ of the algorithm are
exactly the vertices of $C_w(y_k)$ and the not yet registered
vertices of $\partial C_w(y_k)$; moreover, $|\calT_a|$ increases
during this step by at most the number of green vertices in
$\partial C_w(y_k)$. We write {\it at most}, because during the
remainder of step $k$ no new active trees are created but already
present active trees may disappear (which happens if the algorithm
enters part 3b before it enters part 2a again.

Similarly, if $c(y_k)$ is red or green, then the only newly registered vertex is $y_k$ itself; moreover,
in the former case
$|\calT_a|$ goes down during step $k$ by at least $1$, while in the latter case it goes down or
doesn't change. \\
For every connected set $W$ of vertices with $|W| \ge 2$, the number of vertices in
the boundary of $W$ is at most $(D-1) |W|$; hence, we have

\begin{equation}
\label{etbd}
\eta_k \leq D |C_w(y_k)| + \I_{\{c(y_k) \mbox{ not white}\}}.
\end{equation}
\begin{equation}
\label{albd}
\al_k \leq (D-1) |C_w(y_k)| - \I_{\{c(y_k) \mbox{ is red}\}}.
\end{equation}

Note that (since $y_k$ is `fresh') the conditional probability that $c(y_k)$ is red, white
or green, given $\calF_k$, is
$p_r$, $p_w$ and $p_g$
respectively. Also note that, by the condition in the Theorem, $p_w < 1/(D-1)$ and hence
(as is well-known and easy to check) there is a $q <1$ such that for all $n$ and all vertices $v$,

\begin{equation}
\label{expcw}
P(|C_w(v)| \geq n) \leq q^n.
\end{equation}

Moreover, it is easy to see that conditioned on $\calF_k$, which includes the information that $y_k$ is
a specific vertex, say $y$, the cluster size
$|C_w(y_k)|$ is stochastically smaller than $|C_w(y)|$.
Hence the bound \eqref{expcw} also holds (a.s) if we replace its l.h.s. by $P(|C_w(y_k)| \ge n | \calF_k)$.
This, combined with \eqref{etbd}
immediately gives that there is a
$\ga < 1$ such that for all $k$ and $n$,

\begin{equation}
\label{etdbd}
P(\eta_k \geq n | \calF_k) \leq \ga^n.
\end{equation}

As to the $\alpha$'s, define (compare \eqref{albd}), for every vertex $v$,
\begin{equation}
\label{alvbd}
\al(v) = (D-1) |C_w(v)| - \I_{\{c(v) \mbox{ is red}\}}.
\end{equation}

Let $\al'(v), \, v \in V$ be independent copies of the $\alpha(v), \, v \in V$.
By a similar stochastic domination argument that led to \eqref{etdbd}, we have
for all vertices $v$, and all
positive integers $k$ and $n$,

\begin{equation}
\label{aldbd}
P(\alpha_k \geq n | \calF_k,\, y_k = v) \leq P(\alpha(v) \geq n) = P(\alpha'(v) \geq n).
\end{equation}

And, again by \eqref{expcw}, there is a $\la < 1$ such that for all $n$ and $v$

\begin{equation}
\label{alv-xbd}
P(\alpha'(v) \geq n) = P(\alpha(v) \geq n) \leq \lambda^n.
\end{equation}

Further note that, for each vertex $v$, we have $E(|C_w(v)|) = \xi_v(p_w)$.
Hence, condition \eqref{key} in Theorem 1.1 says that there is an
$\ep > 0$ such that for all vertices $v$ we have

\begin{equation}
\label{almbd}
E(\al'(v)) = E(\al(v)) < -\ep.
\end{equation}

\noindent
From \eqref{aldbd} and the definition of the random variables $\al'(v), \, v \in V$,
it follows (from stochastic domination) that, for all positive integers $K$,

\begin{equation}
\label{supbd}
P\left(\sum_{k=1}^K \alpha_k \geq 0\right) \leq
\sup^* P\left(\sum_{k=1}^K \alpha'(v_k) \geq 0\right),
\end{equation}
where we use '*' to indicate that the supremum is taken over all tuples
of $K$ distinct vertices $v_1, v_2, ..., v_K$.

\noindent
From \eqref{alv-xbd} and \eqref{almbd}
it follows (by
standard large-deviation upper bounds
for independent random variables)
that there is a $\beta < 1$ such that for all $K$ and all distinct vertices
$v_1, v_2, ..., v_k$,

$$
P(\sum_{k=1}^K \alpha'(v_k)  \geq 0) \leq \beta^K.
$$

\noindent
From this and \eqref{supbd} it follows that the distribution of $N$
has an exponential tail.

Putting this together with  \eqref{etdbd} and \eqref{Hbd} we  that
the number of vertices  in $H$ has an exponential tail. Indeed the
event that $1+\sum_{k=1}^N \eta_k \ge n$ is contained in the union
of the events $N \ge an$ and $\sum_{k=1}^{an} \eta_k \ge n$; the
probabilities of these events decay exponentially in $n$ for
suitable $a$.

This completes the proof of Proposition \ref{prop1}. (Note that the diameter of $H$ is
at most its volume, since $H$ is a connected graph).
\end{proof}

\smallskip\noindent
{\it Parts (a) and (b) of Theorem~\ref{mainthm}} follow immediately
from Proposition~\ref{prop1} (noting that the vertices of $C_g(x)$ belong to $H$). \\
Using Proposition~\ref{prop1}, {\it Parts (c) and (d)} of the Theorem~\ref{mainthm} can now be derived in the same way as in the special
case $p_w=0$ in Section 2.
This completes the proof of Theorem \ref{mainthm}.
\end{proof}

\medskip\noindent
{\bf Remark:} For the alternative model (i) in Subsection 1.4, the
proof of Theorem \eqref{mainthm} is exactly the same. Note that the
proof doesn't use that the $\tau's$ are exponentially distributed,
it applies in the same manner to any continuous
distribution. \\
For the alternative model (ii) the algorithm in
Subsection 3.1
needs a few small adaptations. Apart from this the proof remains practically the same.

\end{subsection}
\end{section}
{\bf Acknowledgments.} Two of the authors (V.S. and M.E.V.) learned
about the continuum model from E.J. Neves. We thank Antal J\'{a}rai
for comments on Proposition \ref{inv-perc} and Chuck Newman for
drawing our attention to the article \cite{StN}. We also thank Ron
Peled and the referees for corrections in the first manuscript.

\end{document}